\numberwithin{equation}{section}
\newtheorem{proposition}{Proposition}[section]
\DeclareMathOperator{\Ad}{Ad}
\title [Berezin symbols...]{Berezin symbols and spectral measures of representation operators }
\author {Benjamin Cahen}
\address{Universit\'e de Lorraine, Site de Metz, UFR-MIM,
D\'epartement de math\'ematiques,
B\^atiment A,
3 rue Augustin Fresnel, BP 45112,
57073 METZ Cedex 03, France}
\email{benjamin.cahen@univ-lorraine.fr}
\subjclass[2000]{47B32; 47B23; 81Q10; 81R30; 46E22; 32M10}
\keywords{Berezin
quantization; Berezin symbols; unitary
representations;  Lie group representations; reproducing kernel
Hilbert space; coherent states; spectral measures; contractions of representations}
\begin{document}

\maketitle

\begin{abstract}
Let $G$ be a Lie group with Lie algebra $\mathfrak g$ and
let $\pi$ be a unitary representation of $G$ realized on a reproducing kernel Hilbert space. We use Berezin quantization to study spectral measures associated with operators $-id\pi(X)$ for $X\in {\mathfrak g}$.
As an application, we show how results about contractions of Lie group representations give rise to results on convergence of sequences of spectral measures. We give some
examples including contractions of $SU(1,1)$ and $SU(2)$ to the Heisenberg group. 
\end{abstract}

\vspace{1cm}

\section {Introduction} \label{sec:1}

In the years 1970-1975, a general theory of quantization on homogeneous 
K\"ahler manifolds was developed by F. A. Berezin in \cite{Be}, \cite{Be1},  \cite{Be2}. In this theory, an important tool is the notion of covariant symbol
of an operator acting on a reproducing kernel Hilbert space of square-integrable holomorphic functions on a K\"ahler manifold \cite{Be1}. In fact, this notion of covariant symbol has its own interest and, in particular, it appeared
early that Berezin symbols could be helpful to study spectral properties of operators on reproducing kernel Hilbert spaces, see \cite{Be}.

Let $G$ be a Lie group and let $\pi$ be a unitary representation of $G$ on a
reproducing kernel Hilbert space $\mathcal H$ consisting of functions on a 
homogeneous $G$-manifold $M$. Denote by $\mathfrak g$ the Lie algebra of $G$ and by $d\pi$ the differential of $\pi$. In this paper, we are concerned 
with self-adjoint operators of the form $-id\pi(X)$ 
where $X\in {\mathfrak g}$. When $G$ is a simple Lie group and $X$ generates a non-compact one-parameter subgroup of $G$ then C. C. Moore
has shown that the projection-valued measure $E_{\lambda}$ associated
with $-id\pi(X)$ is absolutely continuous with respect to the Lebesgue measure and that the spectrum of $-id\pi(X)$ is either $\mathbb R$ or half-line $(0, \infty)$ or $(-\infty,0)$, see \cite{Moo}. Moreover, S. C. Scull proved
that the spectrum of $-id\pi(X)$ is $\mathbb R$ unless $G$ is the group of a bounded symmetric domain and, in that case, half-line occurs only for $\pi$
in the holomorphic discrete series of $G$ and for certain $X$, see \cite{Sc1},
\cite{Sc2}.

The aim of this paper is to show how Berezin symbols can be used to study
the spectral distribution $d(\langle E_{\lambda}f,f\rangle)$ of $-id\pi(X)$ for some $X\in {\mathfrak g}$ and $f\in {\mathcal H}$. We give in particular some integral formulas for spectral measures allowing explicit computations in some cases. This is illustrated by various examples including the non-degenerate unitary irreducible representations of the Heisenberg group
as well as the holomorphic discrete series representations of $SU(1,1)$.
In particular, we recover some results of \cite{Luo} in a much simpler way.

Otherwise, in the series of papers \cite{{CaBe1}, {CaLux}, {CaJAM}, {CaCont}, {CaDS}}, we used Berezin symbols to obtain some results about contractions of Lie group representations. Recall that if a Lie group $G_0$ is the contraction of a Lie group $G_1$, that is, $G_0$ is the limit case of a sequence of Lie groups isomorphic to $G_1$, then it often happens that the unitary irreducible representations of $G_0$ are also limits in some sense of
sequences of unitary irreducible representations of $G_1$, see \cite{IW}, \cite{Doo}. Moreover, it appears that such contractions of Lie group representations can be connected to convergence of Berezin (covariant) symbols of representation operators, see for instance \cite{{CaJAM}, {CaCont}, {CaDS}}. Thus by combining results on contractions of Lie group representations with the above mentioned formulas for spectral measures of representation operators, we obtain here some results about convergence of these spectral measures. In particular we consider the contraction of the discrete series representations of $SU(1,1)$ and the contraction of the unitary irreducible representations of $SU(2)$ to unitary irreducible representations of the Heisenberg group. Of course, we can hope for further results concerning unitary representations of other Lie groups.

This paper is organized as follows. In Section \ref{sec:2} and 
Section \ref{sec:3}, we recall some basic facts on Berezin symbols of operators acting on reproducing kernel Hilbert spaces. Integral formulas for
spectral measures of $-id\pi(X)$ are given in Section \ref{sec:4} and then illustrated in Section \ref{sec:5} by the case of the Heisenberg group. Finally,
applications of contraction results to spectral measures are presented in 
Section \ref{sec:6} and Section \ref{sec:7}.

\section {Generalities on Berezin quantization} \label{sec:2}
In this section, we review some facts on Berezin
quantization \cite{Be1}, \cite{Be2}. We follow more or less the presentation
of \cite{BH} and \cite{CaScand}.

Let $G$ be a Lie group and let $M$ be a $G$-homogeneous space.
Let $\mu$ be a $G$-invariant measure on $M$. Let $K$ be a measurable function
on $M$ such that $K(x)>0$ almost everywhere and let $\tilde \mu$ be
the measure on $M$ defined by $d\tilde {\mu}(x)=K(x)^{-1}d\mu(x)$.
Let $\mathcal H$ be a reproducing kernel Hilbert space of square
integrable functions on $M$ with respect to $\tilde \mu$. This means that
$\mathcal H$ is a Hilbert space with respect to the $L^2$-norm and, for
each $x\in M$, the evaluation map ${\mathcal H}\ni f \rightarrow f(x)$
is continuous. Then, for each $x\in M$,
there exists a unique function $e_x\in {\mathcal
H}$ (called \textit {a coherent state}) such that $f(x)=\langle f,e_x\rangle$ for each $f\in {\mathcal H}$. The function
$k(x,y):=\overline {e_x(y)}=\langle e_y,e_x\rangle$ is then called \textit {the reproducing
kernel} of ${\mathcal H}$.

Let 
$\alpha: \,G\times M\rightarrow {\mathbb C}^{\ast}$ be a function such that \begin{equation*}\alpha (g_1 g_2,x)=\alpha
(g_1,g_2\cdot x)\alpha (g_2, x)\end{equation*}
for each 
$g_1,g_2 \in G$ and $x\in M$. Then we can define  an action $\pi$ of $G$ on the
space of all functions on $M$, according to the formula
\begin{equation*}(\pi(g)f)(x)=\alpha (g^{-1},x)\,f(g^{-1}\cdot x). \end{equation*}
Assume that $\pi(g)(f)\in {\mathcal H}$ for each $g\in G$
and $f\in {\mathcal H}$. Then $\pi$ induces a representation of $G$
on ${\mathcal H}$. 
\begin{proposition}  \label{prop:21}\cite{BH}, \cite{CaScand}.
\begin{enumerate} 

\item  If $\alpha$ and $K$ are compatible in the sense that we have \begin{equation*}K(g\cdot x)=\vert \alpha
(g,x)\vert^{-2}K(x), \qquad g\in G, \,x\in M, \end{equation*}
then the representation $\pi$ is unitary.

\item If the representation $\pi$ is unitary then we have
\begin{equation*}\pi(g)e_x=\overline {\alpha (g,x)}e_{g\cdot x},\quad \qquad g\in G,\,x\in M, \end{equation*}
and
\begin{equation*}k(g\cdot x,g\cdot y)=\alpha (g,x)^{-1}\overline
{\alpha (g,y)}^{-1}k(x,y),\qquad g\in G,\,x,\,y \in
M.\end{equation*}

\item Moreover, in this case, $\mu$
can be normalized so that $k(x,x)=K(x)$ ($x\in M$).
\end{enumerate}

\end{proposition}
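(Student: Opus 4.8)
The plan is to treat the three assertions in turn, using in each case the cocycle identity for $\alpha$, the $G$-invariance of $\mu$, and the reproducing property of the coherent states. Before anything else I would record two consequences of the cocycle relation. Setting $g_1=g_2=e$ gives $\alpha(e,x)=\alpha(e,x)^2$, so $\alpha(e,x)=1$ since $\alpha$ takes values in $\mathbb C^\ast$; and setting $g_1=g^{-1}$, $g_2=g$ then gives $\alpha(g^{-1},g\cdot x)=\alpha(g,x)^{-1}$. These two relations are the workhorses for the whole computation.

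For (1), I would compute $\|\pi(g)f\|^2=\int_M|\alpha(g^{-1},x)|^2\,|f(g^{-1}\cdot x)|^2\,K(x)^{-1}\,d\mu(x)$ directly from the definitions of $\pi$ and of the norm on $\mathcal H$. The substitution $y=g^{-1}\cdot x$, which preserves $\mu$ by $G$-invariance, turns the integrand into $|\alpha(g^{-1},g\cdot y)|^2\,|f(y)|^2\,K(g\cdot y)^{-1}$. Applying the compatibility hypothesis $K(g\cdot y)=|\alpha(g,y)|^{-2}K(y)$ together with $|\alpha(g^{-1},g\cdot y)|^2=|\alpha(g,y)|^{-2}$ makes the two powers of $|\alpha(g,y)|$ cancel, leaving exactly $\|f\|^2$. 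Unitarity follows once one also notes surjectivity, which is immediate since $\pi(g^{-1})$ provides an inverse.

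For (2), I would characterise $\pi(g)e_x$ by testing against an arbitrary $f\in\mathcal H$: by unitarity and the reproducing property, $\langle f,\pi(g)e_x\rangle=\langle\pi(g^{-1})f,e_x\rangle=(\pi(g^{-1})f)(x)=\alpha(g,x)f(g\cdot x)=\alpha(g,x)\langle f,e_{g\cdot x}\rangle$, and pulling the scalar across the inner product yields $\pi(g)e_x=\overline{\alpha(g,x)}\,e_{g\cdot x}$. The kernel identity then drops out by writing $k(g\cdot x,g\cdot y)=\langle e_{g\cdot y},e_{g\cdot x}\rangle$, substituting $e_{g\cdot x}=\overline{\alpha(g,x)}^{-1}\pi(g)e_x$ and likewise for $e_{g\cdot y}$, and using $\langle\pi(g)e_y,\pi(g)e_x\rangle=\langle e_y,e_x\rangle=k(x,y)$. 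The only delicate point here is the bookkeeping of conjugates: the factor coming from $e_{g\cdot x}$ in the second slot of the inner product gets conjugated, turning $\overline{\alpha(g,x)}^{-1}$ into $\alpha(g,x)^{-1}$, which is precisely what the stated formula requires.

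For (3), the key observation is that setting $y=x$ in the kernel identity gives $k(g\cdot x,g\cdot x)=|\alpha(g,x)|^{-2}k(x,x)$, exactly the same transformation law that $K$ satisfies by hypothesis. Fixing a base point $x_0\in M$ and writing an arbitrary point as $x=g\cdot x_0$ (possible since $M$ is $G$-homogeneous), both $k(x,x)$ and $K(x)$ pick up the same factor $|\alpha(g,x_0)|^{-2}$, so their ratio $k(x,x)/K(x)$ is a positive constant $c$ independent of $x$. Finally I would observe that rescaling $\mu$ by a factor $\lambda$ rescales the inner product on $\mathcal H$, hence the coherent states by $\lambda^{-1}$ and the diagonal kernel $k(x,x)$ by $\lambda^{-1}$; choosing $\lambda=c$ then achieves $k(x,x)=K(x)$. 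The main obstacle, as in (2), is simply keeping the scaling behaviour of $e_x$ under $\mu\mapsto\lambda\mu$ consistent, since the coherent state is defined implicitly through an inner product that is itself being rescaled.
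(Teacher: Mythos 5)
Your proof is correct, and since the paper gives no proof of this proposition (it simply cites \cite{BH} and \cite{CaScand}), your argument is exactly the standard one those references use: the cocycle identities $\alpha(e,x)=1$ and $\alpha(g^{-1},g\cdot x)=\alpha(g,x)^{-1}$, the change of variables with $G$-invariance of $\mu$ for unitarity, duality against the reproducing property for the transformation of coherent states, and the $G$-invariance of the ratio $k(x,x)/K(x)$ on the homogeneous space for the normalization. The only point worth making explicit is that in (3) the constant $c=k(x,x)/K(x)$ is strictly positive only if $\mathcal H\neq\{0\}$, which is implicitly assumed throughout.
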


In the rest of the section, we assume that the conditions introduced in the previous proposition are satisfied.

Now, let $A$ be an operator on $\mathcal H$. The Berezin
(covariant) symbol of $A$ is the function $S(A)$ defined on $M$ by
\begin{equation} S(A)(x)=\frac{\langle A\,e_x\,,\,e_x\rangle}
{ \langle e_x\,,\,e_x\rangle}\end{equation} and the double
Berezin symbol of $A$ is the function defined by
\begin{equation}s(A)(x,y)=\frac {\langle A\,e_y\,,\,e_x\rangle }{ \langle e_y\,,\,e_x\rangle}
\end{equation}
for each $x,\,y\in M$ such that $\langle e_x,e_y\rangle \not=
0$, see \cite{Be1} for instance. We can easily recover $A$ from $s(A)$. Indeed, we have
\begin{align*}
A\,f(x)&=\langle A\,f\,,\,e_x \rangle  
  =\langle f\,,\,A^{\ast}\,e_x\rangle  \\
&=\int _M\,f(y)\overline {A^{\ast}\,e_x(y)}\,K(y)^{-1}\,d\mu (y) \\
&=\int _M\,f(y)\overline {\langle A^{\ast}\,e_x,e_y\rangle}\,K(y)^{-1}\,d\mu (y) \\
&=\int _M \,f(y)\,s(A)(x,y)\langle e_y,e_x\rangle\,K(y)^{-1}\,d\mu
(y).
\\ 
\end{align*}
Then we see that  the map $A\rightarrow s(A)$ is injective and that the
kernel of $A$ is the function
\begin{equation} k_A(x,y)=\langle Ae_y,e_x\rangle=s(A)(x,y)\langle
e_y,e_x\rangle.\end{equation}
Moreover, we have the following result.

\begin{proposition} \label{prop:22} \cite{Be1}, \cite{CaScand}, \cite{CGR} 
\begin{enumerate}

\item If the operator $A$ on $\mathcal H$ has adjoint $A^{\ast}$, then we
have $S(A^{\ast})=\overline{S(A)}$.

\item
For $A$ operator on $\mathcal H$ and $g\in
G$, we have \begin{equation*} S(\pi (g)^{-1}A\pi(g))(x)=S(A)(g\cdot
x),\quad \quad g\in G,\,x,\,y\in M. \end{equation*}
\end{enumerate} \end{proposition}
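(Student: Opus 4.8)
The plan is to verify both statements by direct computation from the definition of the covariant symbol, the only nontrivial input being the transformation rule for coherent states recorded in Proposition~\ref{prop:21}(2).

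For part (1), I would start from
\begin{equation*}
S(A^{\ast})(x)=\frac{\langle A^{\ast}e_x,e_x\rangle}{\langle e_x,e_x\rangle}
\end{equation*}
and use the defining property of the adjoint together with the conjugate symmetry of the inner product to rewrite the numerator as $\langle A^{\ast}e_x,e_x\rangle=\langle e_x,Ae_x\rangle=\overline{\langle Ae_x,e_x\rangle}$. Since the denominator $\langle e_x,e_x\rangle=\|e_x\|^2$ is real and positive it coincides with its own conjugate, so the quotient is exactly $\overline{S(A)(x)}$. The one point requiring care is that $e_x$ should lie in the domains of both $A$ and $A^{\ast}$ for the expressions to make sense; in the symbol calculus considered here this is part of the standing hypotheses on the operators whose symbols we form, and the statement explicitly assumes the existence of $A^{\ast}$.

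For part (2), the key observation is that $\pi(g)$ is unitary, so $\pi(g)^{-1}=\pi(g)^{\ast}$ and I may move the conjugating factors across the inner product:
\begin{equation*}
\langle \pi(g)^{-1}A\pi(g)e_x,e_x\rangle=\langle A\pi(g)e_x,\pi(g)e_x\rangle.
\end{equation*}
Next I would insert the coherent-state identity $\pi(g)e_x=\overline{\alpha(g,x)}\,e_{g\cdot x}$ from Proposition~\ref{prop:21}(2). Because the inner product is linear in the first argument and conjugate-linear in the second, this produces a scalar factor $\overline{\alpha(g,x)}\,\overline{\overline{\alpha(g,x)}}=|\alpha(g,x)|^2$ in front of $\langle Ae_{g\cdot x},e_{g\cdot x}\rangle$. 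Applying the same substitution to the denominator, or equivalently noting that unitarity gives $\|e_x\|^2=|\alpha(g,x)|^2\,\|e_{g\cdot x}\|^2$, the factor $|\alpha(g,x)|^2$ cancels between numerator and denominator and leaves precisely $S(A)(g\cdot x)$.

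Neither part presents a genuine obstacle: both reduce to the conjugate symmetry and sesquilinearity of the inner product together with the unitarity-driven transformation of coherent states. The only step I would double-check is the bookkeeping of the $\alpha$-factors in part (2), making sure the complex conjugates line up so that the surviving prefactor is the modulus squared in both numerator and denominator; once that cancellation is confirmed the identity is immediate.
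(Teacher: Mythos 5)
Your proof is correct; the paper itself gives no proof of this proposition (it simply cites \cite{Be1}, \cite{CaScand}, \cite{CGR}), and your computation is the standard argument: part (1) is conjugate symmetry of the inner product, and part (2) follows from unitarity of $\pi(g)$ together with the coherent-state transformation rule $\pi(g)e_x=\overline{\alpha(g,x)}\,e_{g\cdot x}$ of Proposition~\ref{prop:21}(2), with the $|\alpha(g,x)|^2$ factors cancelling between numerator and denominator exactly as you describe.
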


\section {Hilbert spaces of holomorphic functions} \label{sec:3}

Important examples of reproducing kernel Hilbert spaces are Hilbert spaces of holomorphic functions, see \cite{FaK}. Such spaces naturally appear in Harmonic Analysis as, for instance, carrying spaces of the holomorphic discrete series representations of some semi-simple Lie groups, see \cite{Kn}
or, more generally, of holomorphic representations of quasi-Hermitian Lie groups, see \cite{Ne}.

In this section, we assume that, with notation of Section \ref{sec:2}, $M$ is a domain of ${\mathbb C}^n$, $G$ consists of holomorphic automorphisms of $M$ and $\mathcal H$ consists of holomorphic functions on $M$. 

Let $\mu_L$ be the Lebesgue measure on $M\subset {\mathbb C}^n$.
Write $\mu=\delta. \mu_L$ for the $G$-invariant measure on $M$ where
$\delta>0$ is a continuous function on $M$.  In order to avoid technicalities, we also assume here that $K$ is continuous,.

Then $\mathcal H$ consists of all holomorphic functions $f$ on $M$ such that
\begin{equation*}\Vert f\Vert ^2_{\mathcal H}:=
\int _M\, \vert f(z)\vert^2K(z)^{-1}\delta(z)\,d\mu_L(z)<\infty.\end{equation*}

We easily see that the evaluation map $f\rightarrow f(z)$ is continuous.
Indeed, given $z\in M$ we can fix $r>0$ such that the closed polydisk
$D_r(z):=\{w\in M\,:\,\vert w_k-z_k\vert<r, \, k=1,2,\ldots,n\}$ is contained in $M$. By the mean value property, we have
\begin{equation*}f(z)=(\pi r^2)^{-n}\,\int_{D_r(z)}f(w)\,d\mu_L(w).
\end{equation*}
Then, by the Cauchy-Schwarz equality, we get
\begin{equation*}\vert f(z)\vert^2\leq (\pi r^2)^{-2n}\,\Vert f\Vert_{\mathcal H}^2 \int_{D_r(z)}K(w)\delta(w)^{-1}d\mu_L(w),
\end{equation*}
hence the continuity of the map $f\rightarrow f(z)$.

Note that by the same way we can show that the space ${\mathcal O}(M)$
of all holomorphic functions on $M$ being endowed with uniform convergence on compact subsets, the natural injection ${\mathcal H}\rightarrow {\mathcal O}(M)$ is injective.

In this context, the reproducing kernel $(z,w)\rightarrow k(z,w)=\langle e_w,e_z\rangle_{\mathcal H}$ of ${\mathcal H}$ is holomorphic in the variable $z$ and anti-holomorphic in the variable $w$. More generally,
let $A$ be an operator on ${\mathcal H}$. Then the function $s(A)(z,w)$
is holomorphic in the variable $z$ and anti-holomorphic in the variable $w$.
Consequently, $s(A)$-hence $A$-is determined by its restriction to the diagonal of $M\times M$, that is, by $S(A)$.

Note that, in many cases of interest, the polynomials are elements of 
${\mathcal H}$, see \cite{CaScand}, \cite{CaDS}, \cite{Ne}.

\section {Berezin symbols and spectral measures} \label{sec:4}

Here we retain the notation of Section \ref{sec:2} and we assume that we are in the setting of Section \ref{sec:3}.
Let us introduce some additional notation. Let ${\mathcal S}(\mathbb R)$ be
the space of all Schwartz functions on $\mathbb R$ and 
${\mathcal S}'(\mathbb R)$ be the space of all tempered distributions on 
$\mathbb R$. The normalization of the Fourier transform $\mathcal F:
{\mathcal S}(\mathbb R)\rightarrow {\mathcal S}(\mathbb R)$ is taken here
as follows. For $\phi\in {\mathcal S}(\mathbb R)$, we define
\begin{equation*}({\mathcal F}\phi)(x)=\int_{\mathbb R}\,e^{-itx}\phi(t)\,dt.\end{equation*}
The inverse Fourier transform is then
\begin{equation*}({\mathcal F}^{-1}\phi)(t)=\frac{1}{2\pi}\int_{\mathbb R}\,e^{itx}\phi(x)\,dx.\end{equation*}

Recall that $\mathcal F$ can be extended to ${\mathcal S}'(\mathbb R)$
via
\begin{equation*}\langle {\mathcal F}(\nu), \phi\rangle=
\langle \nu, {\mathcal F}\phi\rangle\end{equation*} 
for each $\nu \in {\mathcal S}'(\mathbb R)$ and each $\phi \in {\mathcal S}(\mathbb R)$. Similarly, one has
\begin{equation*}\langle {\mathcal F}^{-1}(\nu), \phi\rangle=
\langle \nu, {\mathcal F}^{-1}\phi\rangle.\end{equation*}

Now, let $A$ be a self-adjoint operator on $\mathcal H$ and let $A=\int_
{\mathbb R}\lambda\, dE_{\lambda}$ be the spectral decomposition of $A$.
Then we also have, for each $t\in {\mathbb R}$, 
\begin{equation*}\exp(-itA)=\int_
{\mathbb R}e^{-it\lambda} dE_{\lambda}.\end{equation*}
In particular, if we take $f\in {\mathcal H}$ such that $\Vert f\Vert_{\mathcal H}=1$ and we denote $\mu_f:=d(\langle E_{\lambda}f,f\rangle_{\mathcal H})$  then we have
\begin{equation}\label{eq:41} \langle\exp(-itA)f,f\rangle_{\mathcal H}=\int_
{\mathbb R}e^{-it\lambda} d\mu_f(\lambda).\end{equation}

Since $\Vert f\Vert_{\mathcal H}=1$, $\mu_f$ is a probability measure on $\mathbb R$ hence a tempered distribution on $\mathbb R$. Then we have the following proposition.

\begin{proposition} \label{prop:41} Let $f\in {\mathcal H}$ such that $\Vert f\Vert_{\mathcal H}=1$.

\begin{enumerate}
\item $\mu_f$ is the inverse Fourier transform of the function 
$F_f:t\rightarrow \langle\exp(-itA)f,f\rangle_{\mathcal H}$ considered as a tempered distribution on $\mathbb R$.

\item Suppose that $F_f$ is integrable on $\mathbb R$. Then $\mu_f$ is absolutely continuous with respect to the Lebesgue measure on $\mathbb R$
and its density $\varphi_f$ is given by
\begin{equation*}\varphi_f(\lambda)=
\frac{1}{2\pi}\int_{\mathbb R}\,e^{it\lambda}\,F_f(t)dt.\end{equation*}

\end{enumerate}
\end{proposition}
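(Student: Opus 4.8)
The plan is to recognize that equation \eqref{eq:41} already identifies $F_f$ as the Fourier transform of the measure $\mu_f$, so that part (1) is essentially a restatement of the duality definition of ${\mathcal F}$ on ${\mathcal S}'({\mathbb R})$, while part (2) then follows from the classical inversion theorem for integrable functions together with the fact that the distributional and pointwise inverse transforms agree on $L^1({\mathbb R})$.

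For part (1), I would fix a test function $\phi\in{\mathcal S}({\mathbb R})$ and compute $\langle {\mathcal F}(\mu_f),\phi\rangle$ directly from the definition. By the duality formula this equals $\langle \mu_f,{\mathcal F}\phi\rangle=\int_{\mathbb R}({\mathcal F}\phi)(\lambda)\,d\mu_f(\lambda)=\int_{\mathbb R}\Big(\int_{\mathbb R}e^{-it\lambda}\phi(t)\,dt\Big)\,d\mu_f(\lambda)$. Since $\mu_f$ is a probability measure and $\phi$ is integrable, the double integral of $|\phi(t)|$ against $dt\otimes d\mu_f$ equals $\|\phi\|_{L^1}<\infty$, so Fubini's theorem applies and the order of integration may be exchanged. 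Using \eqref{eq:41} to evaluate the inner integral over $\lambda$, this becomes $\int_{\mathbb R}\phi(t)\,F_f(t)\,dt=\langle F_f,\phi\rangle$. Hence ${\mathcal F}(\mu_f)=F_f$ in ${\mathcal S}'({\mathbb R})$, which is exactly the assertion $\mu_f={\mathcal F}^{-1}(F_f)$.

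For part (2), assume $F_f\in L^1({\mathbb R})$. The classical inversion theorem then yields the bounded continuous function $\varphi_f(\lambda)=\frac{1}{2\pi}\int_{\mathbb R}e^{it\lambda}F_f(t)\,dt$, and for an integrable function this pointwise inverse transform agrees with the distributional inverse transform ${\mathcal F}^{-1}(F_f)$. Combined with part (1) this gives $\mu_f={\mathcal F}^{-1}(F_f)=\varphi_f$ in ${\mathcal S}'({\mathbb R})$, i.e. $\int_{\mathbb R}\phi\,d\mu_f=\int_{\mathbb R}\phi(\lambda)\,\varphi_f(\lambda)\,d\lambda$ for all $\phi\in{\mathcal S}({\mathbb R})$. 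Both $\mu_f$ and $\varphi_f\,d\lambda$ are locally finite measures on ${\mathbb R}$ defining the same tempered distribution, so they coincide as measures; hence $\mu_f$ is absolutely continuous with respect to Lebesgue measure and its density is $\varphi_f$.

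The computation is routine; the only points needing care are the Fubini exchange in part (1), which is immediate because $\mu_f$ is finite and $\phi$ integrable, and, in part (2), the identification of a tempered distribution that is represented both by a measure and by a continuous $L^1_{\mathrm{loc}}$ function, which is a standard uniqueness fact and hence no genuine obstacle. The essential observation, which makes the whole proof short, is simply that \eqref{eq:41} says precisely ${\mathcal F}(\mu_f)=F_f$.
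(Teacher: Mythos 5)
Your proof is correct and follows essentially the same route as the paper: part (1) is the identical duality-plus-Fubini computation using Eq.~\eqref{eq:41}, and part (2) reaches the same conclusion, the only cosmetic difference being that you cite the standard fact that the distributional and pointwise inverse transforms of an $L^1$ function agree, whereas the paper proves that fact on the spot by writing out $\langle F_f,{\mathcal F}^{-1}\phi\rangle$ and applying Fubini to the integrable function $(t,\lambda)\mapsto F_f(t)\phi(\lambda)$. Both arguments end by invoking the same uniqueness principle for a locally finite measure determined by its action on test functions, so there is no substantive divergence.
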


\begin{proof} (1) For each $\phi\in {\mathcal S}(\mathbb R)$, we have
\begin{align*}\langle {\mathcal F}\mu_f,\phi \rangle=&\langle \mu_f, {\mathcal F}\phi \rangle\\
=&\int_{\mathbb R}({\mathcal F}\phi)(\lambda)d\mu_f(\lambda)\\
=&\int_{\mathbb R}\int_{\mathbb R}e^{-it\lambda}\phi(t)\,d\mu_f({\lambda})\,dt\\
=&\int_{\mathbb R}\,F_f(t)\phi(t)\,dt
\end{align*}
by Eq. \ref{eq:41}. Then we get ${\mathcal F}\mu_f=F_f$.

(2) For each $\phi\in {\mathcal S}(\mathbb R)$, we have
\begin{align*}
\langle \mu_f,\phi \rangle=&\langle {\mathcal F}^{-1}F_f,\phi \rangle
=\langle F_f, {\mathcal F}^{-1}\phi \rangle \\
=&\int_{\mathbb R}\,F_f(t)({\mathcal F}^{-1}\phi)(t) dt \\
=&\frac{1}{2\pi}\int_{\mathbb R}\,F_f(t)\left(\int_{\mathbb R}\,e^{it\lambda}\phi(\lambda)\,d\lambda \right)dt\\
=&\frac{1}{2\pi}\int_{\mathbb R}\left ( \int_{\mathbb R}\,e^{it\lambda}\,F_f(t)dt\right)\, \phi(\lambda)d\lambda.\\
\end{align*}
Note that we can apply Fubini's Theorem, since $(t, \lambda)\rightarrow F_f(t)\phi(\lambda)$
is integrable on ${\mathbb R}^2$. The result follows.
\end{proof}

The Berezin symbols naturally appear when we consider the particular case where
$f= \Vert e_z\Vert_{\mathcal H}^{-1}e_z$ for $z\in M$. In this case we have 
\begin{equation*}F_f(t)=\Vert e_z\Vert_{\mathcal H}^{-2}\langle \exp(-itA)e_z,e_z\rangle_ {\mathcal H}=S(\exp(-itA))(z)\end{equation*}
and $\mu_f$ is then given by
\begin{equation*}\langle \mu_f,\phi \rangle=
\langle F_f,{\mathcal F}^{-1}\phi \rangle=\int_{\mathbb R}S(\exp(-itA))(z)({\mathcal F}^{-1}\phi)(t)\,dt \end{equation*}
for each $\phi \in {\mathcal S}(\mathbb R)$. In particular, if the function $t\rightarrow S(\exp(-itA))(z)$ is integrable on $\mathbb R$ for each $z\in M$ then $\mu_f$ has density
\begin{equation} \label{eq:dens} \varphi_f(\lambda)=\frac{1}{2\pi}\int_{\mathbb R}
S(\exp(-itA))(z)\,e^{it\lambda}\,dt \end{equation}
and we have the following formula for the Berezin symbol of $E_{\lambda}$
(which determines $E_{\lambda}$, see Section \ref{sec:3}). Since
\begin{equation*}S(E_{\lambda})(z)=\langle  E_{\lambda}f,f\rangle_{\mathcal H}=\mu_f(]-\infty,\lambda])=\int_{-\infty}^{\lambda}\varphi_f(x)\,dx,  \end{equation*}
we get
\begin{equation*}S(E_{\lambda})(z)=\frac{1}{2\pi}\int_{\mathbb R}
\int_{]-\infty,\lambda]}S(\exp(-itA))(z)\,e^{ixt}\,dt\,dx.\end{equation*}

In this paper, we focus on the case where $A=-id\pi(X)$ for $X\in {\mathfrak g}$. We have then
\begin{equation*}S(\exp(-itA))(z)=S(\pi(\exp(-tX)))(z).\end{equation*}
Thus we see that in this case $\mu_f$ is closely connected to the function
$g\rightarrow S(\pi(g))$ which is called \textit{ the star-exponential} (since it is a convergent version of the formal star-exponential which appears in Deformation Quantization, see \cite{Fron}, \cite{ArnExp}) and played a central role in the construction of the generalized Fourier transform, \cite{ACG}, \cite{Wild}.

\section{Example: the Heisenberg group} \label{sec:5}

Let $H$ be the Heisenberg group and
${\mathfrak h}$ be the Lie algebra of $H$. Let
$v_1,v_2,v_3$ be a basis of
${\mathfrak h}$ in which the only non trivial brackets are
$[v_1,v_2]=v_3$.

For $(a_1,a_2,a_3)\in {\mathbb R}^3$, we denote by $[a_1,a_2,a_3]$ the element $\exp_{H}(a_1v_1+a_2v_2+a_3v_3)$ of $H$. The multiplication of $H$ is then given by
\begin{equation*}[a_1,a_2,a_3]\cdot [b_1,b_2,b_3]=
[a_1+b_1,a_2+b_2,a_3+b_3+\tfrac{1}{2}(a_1b_2-a_2b_1)]\end{equation*}
for $(a_1,a_2,a_3)\in {\mathbb R}^3$ and $(b_1,b_2,b_3)\in {\mathbb R}^3$.

We fix a real number $\gamma>0$ (the
case $\gamma<0$ can be treated similarly). By the Stone-von Neumann
theorem, there exists a unique (up to unitary equivalence) unitary
irreducible representation of $H$ whose restriction to the center
of $H$ is the character $[0,0,a_3]\rightarrow e^{i\gamma a_3}$, see
\cite{Fo}. We describe now 
the Bargmann-Fock realization $\pi_{\gamma}$ of this representation, see 
for instance \cite{Tay}, \cite{CaJAM}.

Let ${\mathcal H}_{\gamma}$ be the Hilbert space of all holomorphic functions $f$
on ${\mathbb C}$ such that \begin{equation*}\Vert F\Vert^2_\gamma
:=\int_{{\mathbb C}} \vert f(z)\vert^2\, e^{-\vert
z\vert^2/2\gamma}\,d\mu_{\gamma} (z) <\infty\end{equation*} where
 $d\mu_{\gamma}(z):=(2\pi
\gamma)^{-1}\,dx\,dy$. Here $z=x+iy$ with $x, y \in {\mathbb
R}$.

Let us consider the action of of $H$ on ${\mathbb C}$
defined by $g\cdot z:=z+\gamma (a_2-ia_1)$ for $g=[a_1,a_2,a_3]\in H$ and
$z\in {\mathbb C}$. Then $\pi_{\gamma}$ is the representation of
$H$ on ${\mathcal H}_{\gamma}$ given by
\begin{equation*}(\pi_{\gamma} (g)\,f)(z)=\alpha
(g^{-1},z)\,f(g^{-1}\cdot z) \end{equation*} where  $\alpha$
is defined by
\begin{equation*}\alpha (g,z)=\exp
\left(-ia_3\gamma+(1/4) (a_2+a_1i)(-2z+\gamma(-a_2+a_1i))\right)
\end{equation*} for $g=[a_1,a_2,a_3]\in H$ and $z\in {\mathbb C}$.

The differential of $\pi_{\gamma}$ is given by
\begin{equation*}\left\{\begin{aligned}
(d\pi_{\gamma}(v_1)f)(z)=&\frac{1}{2}izf(z)+\gamma if'(z)\\
(d\pi_{\gamma}(v_2)f)(z)=&\frac{1}{2}zf(z)-\gamma f'(z)\\
(d\pi_{\gamma}(v_3)f)(z)=&i\gamma f(z).
\end{aligned}\right.\end{equation*}

The coherent states are given by $e_z^{\gamma}(w)=\exp({\bar z}w/2\gamma)$. Then we have the reproducing property
$f(z)=\langle f,e_z\rangle_{\gamma }$ for each $f\in {\mathcal
H}_{\gamma}$ where $\langle\cdot ,\cdot\rangle_{\gamma}$ denotes
the scalar product on ${\mathcal H}_{\gamma}$. Note also that an orthonormal basis of ${\mathcal H}_{\gamma}$ is the family $f_p^{\gamma}(z)=(2^p\gamma^p p!)^{-1/2}z^p$ for $p\in {\mathbb N}$.

Then we can easily verify that for each $g=[a_1,a_2,a_3]\in H$, the Berezin symbol of $\pi_{\gamma}(g)$ is 
\begin{equation*}S_{\gamma}(\pi_{\gamma}(g))(z)=
\exp\left( i\gamma a_3-\tfrac{\gamma}{4}(a_1^2+a_2^2)+
\tfrac{1}{2}(a_2+ia_1)z+\tfrac{1}{2}(-a_2+ia_1){\bar z}\right).
\end{equation*}

Now, let $X=a_1v_1+a_2v_2+a_3v_3\in {\mathfrak h}$ with $(a_1,a_2)\not=(0,0)$. Let $z=x+iy$ with $x,y \in {\mathbb R}$. Then we have
\begin{equation} \label{eq:5spiga} S_{\gamma}(\pi_{\gamma}(\exp(-tX)))(z)=e^{-it(a_1x+a_2y+a_3\gamma)}\,e^{-\gamma t^2(a_1^2+a_2^2)/4}\end{equation}
and, clearly, the function $t\rightarrow S_{\gamma}(\pi_{\gamma}(\exp(-tX)))(z)$ is integrable. Then, applying Proposition \ref{prop:41}, we obtain that the measure $$d(\langle E_{\lambda}(\Vert e^{\gamma}_z\Vert^{-1} e^{\gamma}_z),\Vert e^{\gamma}_z\Vert^{-1} e^{\gamma}_z\rangle_{\gamma})$$ associated with $-id\pi_{\gamma}(X)$ has density
\begin{align*}\varphi(&\lambda):=\frac{1}{2\pi}\int_{\mathbb R}\,e^ {it\lambda}\,S_{\gamma}(\pi_{\gamma}(\exp(-tX)))(z)\,dt \\
=&\frac{1}{\sqrt{\pi\gamma(a_1^2+a_2^2)}}
\exp\left(-\frac{1}{\gamma(a_1^2+a_2^2)}(\lambda
-a_1x-a_2y-a_3\gamma)^2\right). \end{align*}
which is a Gaussian function. The case $X=v_1$, $z=0$ (hence $e_z^{\gamma}=1$)
was already considered in \cite{Luo}.

\section{The contraction of $SU(1,1)$ to the Heisenberg group} \label{sec:6}

In this section we first recall some generalities about the holomorphic discrete series of 
$SU(1,1)$ in the context of the Berezin quantization and its contraction to the non-degenerate unitary irreducible representations of the Heisenberg group which was introduced in \cite{CaLux}. We closely follow the exposition of \cite{CaLux}, see also \cite{CaDS}.

Let $SU(1,1)$ denote the group of all matrices
\begin{equation*}g(a,b):=\begin{pmatrix}a & b\\ {\bar b} & {\bar a} \end{pmatrix}\end{equation*}  where $a,b\in {\mathbb C}$ satisfy $\vert a\vert^2-\vert b\vert^2=1$.

Note that $SU(1,1)$ naturally acts on the open unit disk ${\mathbb D}=(\vert z\vert<1)$
by fractional transforms
\begin{equation*}g(a,b)\cdot z:=\frac{az+b}{{\bar b}z+{\bar a}}. \end{equation*}

For each $z\in {\mathbb D}$ we denote $g_z:=g((1-\vert z \vert)^{-1/2},
(1-\vert z \vert)^{-1/2}z)$. Then we have $g_z\cdot 0=z$ for each $z\in {\mathbb D}$, that is, the map $z\rightarrow g_z$ is a section for the action of $SU(1,1)$ on
${\mathbb D}$.

 The Lie algebra $su(1,1)$ of $SU(1,1)$ has basis
\begin{equation*}u_1=\frac{1}{2}\begin{pmatrix}0 & -i\\ i & 0 \end{pmatrix};\quad
u_2=\frac{1}{2}\begin{pmatrix}0 & 1\\ 1 & 0 \end{pmatrix};\quad
u_3=\frac{1}{2}\begin{pmatrix}-i & 0\\ 0 & i \end{pmatrix}.
\end{equation*}

We introduce now the holomorphic discrete series $(\pi_n)$ of $SU(1,1)$.
Fix an integer $n>2$.  Let ${\mathcal H}_n$ be the Hilbert space of all holomorphic functions $f:{\mathbb D}\rightarrow {\mathbb C}$ such that

\begin{equation*}\Vert f\Vert_n ^2=\int_{\mathbb D}\,\vert f(z) \vert ^2\,d\mu_n(z)\,<\infty \end{equation*} 
where $d\mu_n(z):=\frac{n-1}{ \pi} (1-z{\bar
z})^{n-2}\,dx\,dy$, $dx\,dy$ denoting as usual the Lebesgue measure on
${\mathbb C}\simeq {\mathbb R}^2$.

An orthonormal basis of ${\mathcal
H}_n$ is then given by the family 
$f^n_p\,(z)=\binom{n+p-1}{p}^{1/2}\,z^p$ for $p\in {\mathbb N}$ 
and the coherent states are $e_z^n(w)=(1-w{\bar
z})^{-n}$ for $z,w\in {\mathbb D}$.

Let $\pi_n$ be the representation of $SU(1,1)$ defined on ${\mathcal
H}_n$ by 
\begin{equation*}(\pi_n\,(g(a,b))\,f)(z)=(a-{\bar b}z)^{-n}\,f(g(a,b)^{-1}\cdot
z).\end{equation*}
Then the family $(\pi_n)$ is the holomorphic discrete series of $SU(1,1)$, see \cite{Kn}.

The differential $d\pi_n$ is given by
\begin{equation*}\left\{\begin{aligned}
(d\pi_{n}(u_1)f)(z)=&\frac{n}{2}izf(z)+\frac{1}{2}i(z^2+1) f'(z)\\
(d\pi_{n}(u_2)f)(z)=&\frac{n}{2}zf(z)+\frac{1}{2}(z^2-1) f'(z) \\
(d\pi_{n}(u_3)f)(z)=&\frac{n}{2}if(z)+iz f'(z).
\end{aligned}\right.\end{equation*}

For each operator $A$ on ${\mathcal
H}_n$ we denote by $S_n(A)$ the Berezin symbol of $A$. Then we have 

\begin{equation}\label{eq:6spi} S_n(\pi_n(g(a,b)))(z)=(a-{\bar a}z{\bar z}-{\bar b}z+b{\bar z})^{-n}\,(1-z{\bar z})^{n},\end{equation}
see \cite{CaLux}.

Now we  introduce the contraction of $SU(1,1)$ to the Heisenberg
group at the Lie algebra level. 
Let $r>0$ and let $C_r:{\mathfrak h}\rightarrow su(1,1)$ be the linear map defined by
\begin{equation*}C_r(v_1)=r u_1 ,\quad C_r(v_2)=r u_2,\quad
C_r(v_3)=r^2 u_3.\end{equation*}

Then we have for each $X,Y\in {\mathfrak h}$
\begin{equation*}\lim_{r\to 0}\,C^{-1}_r([C_r(X)\,,\,C_r(Y)]_{su(1,1)})=[X,Y]_{\mathfrak h}.\end{equation*} 
and we say that the family
$(C_r)_{r>0}$ is a contraction of $su(1,1)$ to ${\mathfrak h}$, see
\cite{IW}, \cite{MN}, \cite{Doo}.

The corresponding group contraction $c_r:H\rightarrow SU(1,1)$ is then given by 
\begin{equation*}c_r(\exp_H\,X)=\exp_{SU(1,1)}(C_r\,(X))\end{equation*}
and satisfies the following property: for each $x,y\in H$ there exists $r_0>0$ such that, for each $r>0$ such that $r<r_0$, the expression  $c^{-1}_r\,(c_r\,(x)c_r\,(y)^{-1})$ is well-defined and we have
\begin{equation*}\lim_{r\to 0}\,c^{-1}_r\,(c_r\,(x)\,c_r\,(y)^{-1})=xy^{-1}, \end{equation*} see \cite{CaLux}.

The following proposition was proved in \cite{CaLux}. For each $n>2$ let $r(n)>0$ such that $nr(n)^2=2\gamma$. A geometric interpretation of this
quite mysterious condition (in terms of coadjoint orbits associated with representations) can be found in \cite{CaLux}, see also \cite{CaDS}.

\begin{proposition} \label{prop:contrep} Let $h=[a_1,a_2,a_3]\in H$ and let 
\begin{equation*} g_n:=
c_{r(n)}(h)=\exp_{SU(1,1)}(C_{r(n)}(a_1v_1+a_2v_2+a_3v_3)).
\end{equation*}
 Then 
\begin{enumerate}
\item   For each $z\in {\mathbb C}$, we have
\begin{equation*}\lim_{n\to +\infty}
S_n({\pi}_n(g_n))\left(\frac{z}{\sqrt{2\gamma
n}}\right)=s_{\gamma}(\pi_{\gamma}(h))(z).\end{equation*}

\item For each $p,q \in {\mathbb N}$, we have
\begin{equation*}\lim_{n\to +\infty}\langle \pi_n(g_n)f_p^n,f_q^n\rangle_n=
\langle \pi_{\gamma}(h)f_p^{\gamma},f_q^{\gamma}\rangle_{\gamma}.
\end{equation*}

\item For each $n>2$, let $B_n:{\mathcal H}_{\gamma}\rightarrow {\mathcal H}_n$ be the unitary operator defined by $B_n(f_p^{\gamma})=f_p^n$ for each
$p\in  {\mathbb N}$. For each $f\in  {\mathcal H}_{\gamma}$, we have

\begin{equation*}\lim_{n\to +\infty}\Vert (B_n^{-1}\pi_n(g_n)B_n)f-\pi_{\gamma}(h)f\Vert_{\gamma}=0.
\end{equation*}
\end{enumerate}

\end{proposition}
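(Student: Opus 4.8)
The plan is to obtain part (1) by a direct computation with the explicit Berezin symbol formula (\ref{eq:6spi}) and then to deduce parts (2) and (3) from it. Throughout set $r=r(n)=\sqrt{2\gamma/n}$, so that $nr^2=2\gamma$ and $r\to0$ as $n\to\infty$, and write $g_n=g(a_n,b_n)$.

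For part (1) I would first exponentiate the $su(1,1)$ matrix $M_n:=C_r(a_1v_1+a_2v_2+a_3v_3)=r(a_1u_1+a_2u_2)+r^2a_3u_3$. It is trace-free and satisfies $M_n^2=\omega^2 I$ with $\omega=\tfrac r2\sqrt{(a_1^2+a_2^2)-r^2a_3^2}$, so $g_n=\exp M_n=\cosh\omega\,I+\tfrac{\sinh\omega}{\omega}M_n$, whence $a_n=\cosh\omega-\tfrac{\sinh\omega}{\omega}\tfrac{ir^2a_3}{2}$ and $b_n=\tfrac{\sinh\omega}{\omega}\tfrac{r(a_2-ia_1)}{2}$. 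Substituting these into (\ref{eq:6spi}) at the point $\zeta:=z/\sqrt{2\gamma n}=rz/(2\gamma)$, I write $S_n(\pi_n(g_n))(\zeta)=\exp(-n\log B_n)\,(1-\zeta\bar\zeta)^n$ with $B_n=a_n-\bar a_n\zeta\bar\zeta-\bar b_n\zeta+b_n\bar\zeta$. Since $\omega^2$, $\zeta\bar\zeta$, $a_n-1$ and $b_n$ are all $O(r^2)$ while $-n\log B_n=-\tfrac{2\gamma}{r^2}\log B_n$, I expand $\log B_n=(B_n-1)+O(r^4)$ and use $nr^4=2\gamma r^2\to0$ to pass to the limit. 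The $|z|^2/(2\gamma)$ terms coming from $B_n$ and from $(1-\zeta\bar\zeta)^n$ cancel, and the surviving exponent is $i\gamma a_3-\tfrac\gamma4(a_1^2+a_2^2)+\tfrac12(a_2+ia_1)z+\tfrac12(-a_2+ia_1)\bar z$, i.e. exactly the Heisenberg symbol of Section \ref{sec:5}; I read the right-hand side of (1) as this single symbol $S_\gamma(\pi_\gamma(h))(z)$, the diagonal value of $s_\gamma$. The estimates are uniform for $z$ in a compact set and apply verbatim to the double symbol, giving $s_n(\pi_n(g_n))(z/\sqrt{2\gamma n},w/\sqrt{2\gamma n})\to s_\gamma(\pi_\gamma(h))(z,w)$, which I will use next.

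For part (2) I would go through operator kernels. From $e_z^n=\sum_p\overline{f_p^n(z)}f_p^n$ one gets, on the one hand,
\[ k_{\pi_n(g)}(z,w)=\langle\pi_n(g)e_w,e_z\rangle=\sum_{p,q}\binom{n+q-1}{q}^{1/2}\binom{n+p-1}{p}^{1/2}\langle\pi_n(g)f_p^n,f_q^n\rangle_n\,z^q\bar w^p, \]
and on the other hand $k_{\pi_n(g)}(z,w)=s_n(\pi_n(g))(z,w)\,k_n(z,w)$ with $k_n(z,w)=(1-z\bar w)^{-n}$. Under the scaling $(z,w)\mapsto(z/\sqrt{2\gamma n},w/\sqrt{2\gamma n})$ the reproducing kernel converges, $(1-z\bar w/(2\gamma n))^{-n}\to e^{z\bar w/(2\gamma)}=k_\gamma(z,w)$, uniformly on compacta; combined with the double-symbol limit this yields $k_{\pi_n(g_n)}(z/\sqrt{2\gamma n},w/\sqrt{2\gamma n})\to s_\gamma(\pi_\gamma(h))(z,w)k_\gamma(z,w)=k_{\pi_\gamma(h)}(z,w)$, uniformly on compacta. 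Each side being holomorphic in $z$ and in $\bar w$, Cauchy's formula in the variables $(z,\bar w)$ extracts the Taylor coefficients, and uniform convergence on a polycircle gives convergence of the coefficient of $z^q\bar w^p$. Incorporating the scaling factor $(2\gamma n)^{-(p+q)/2}$ and the asymptotics $\binom{n+q-1}{q}^{1/2}(2\gamma n)^{-q/2}\to(2^q\gamma^q q!)^{-1/2}$ (and likewise in $p$), which are precisely the Heisenberg normalizations $f_p^\gamma=(2^p\gamma^p p!)^{-1/2}z^p$, this coefficient convergence becomes $\langle\pi_n(g_n)f_p^n,f_q^n\rangle_n\to\langle\pi_\gamma(h)f_p^\gamma,f_q^\gamma\rangle_\gamma$, which is (2).

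Finally, part (3) follows by upgrading the coefficientwise convergence of (2) to norm convergence via unitarity. Since $B_n(f_p^\gamma)=f_p^n$, one has $(B_n^{-1}\pi_n(g_n)B_n)f_p^\gamma=\sum_q\langle\pi_n(g_n)f_p^n,f_q^n\rangle_n f_q^\gamma$, whose $q$-th coordinate tends by (2) to the $q$-th coordinate of $\pi_\gamma(h)f_p^\gamma$, so $(B_n^{-1}\pi_n(g_n)B_n)f_p^\gamma\to\pi_\gamma(h)f_p^\gamma$ weakly. As $B_n$ and $\pi_n(g_n)$ are unitary, $\|(B_n^{-1}\pi_n(g_n)B_n)f_p^\gamma\|_\gamma=\|f_p^\gamma\|_\gamma=\|\pi_\gamma(h)f_p^\gamma\|_\gamma$, and weak convergence together with convergence of the norms forces norm convergence on each basis vector; a standard $3\varepsilon$ argument using the density of finite combinations of the $f_p^\gamma$ and the uniform bound $\|B_n^{-1}\pi_n(g_n)B_n\|=1$ then extends this to every $f\in\mathcal H_\gamma$. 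I expect the main obstacle to be part (2): one must make the double-symbol convergence uniform on compacta and rigorously justify interchanging the limit $n\to\infty$ with coefficient extraction, which requires matching the growth of the binomial factors precisely against the scaling $(2\gamma n)^{-(p+q)/2}$.
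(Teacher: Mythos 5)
Your proof of part (1) is essentially the paper's own argument: the paper likewise exponentiates $C_{r(n)}(X)$ via the identity (\ref{eq:exp}) to get $g_n=g(\alpha_n,\beta_n)$ with exactly your $a_n,b_n$, substitutes into (\ref{eq:6spi}), takes logarithms, observes the cancellation of the $\vert z\vert^2/(2\gamma)$ contributions, and passes to the limit using $n(\alpha_n-1)\to\tfrac{\gamma}{4}(a_1^2+a_2^2)-i\gamma a_3$; your reading of the right-hand side as the single symbol $S_\gamma(\pi_\gamma(h))(z)$ is the intended one (the lowercase $s_\gamma$ in the statement is a slip). For parts (2) and (3) the paper gives no proof at all --- it defers to \cite{CaLux} and explicitly details only (1) --- so here your argument is a genuine supplement rather than a variant. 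Your route for (2), upgrading (1) to locally uniform convergence of the double symbols (legitimate, since $s_n(\pi_n(g))(z,w)$ is the holomorphic/antiholomorphic continuation of (\ref{eq:6spi}) off the diagonal and all your estimates are locally uniform), multiplying by the convergent reproducing kernels, and extracting Taylor coefficients by Cauchy's formula against the normalizations $\binom{n+q-1}{q}^{1/2}(2\gamma n)^{-q/2}\to(2^q\gamma^q q!)^{-1/2}$, is clean and correct; the reference instead computes the matrix coefficients more directly, but your method has the advantage of deriving (2) from (1) with no new computation. Your deduction of (3) from (2) --- coordinatewise convergence plus equality of norms gives strong convergence on each $f_p^\gamma$, then density and the uniform bound $\Vert B_n^{-1}\pi_n(g_n)B_n\Vert=1$ --- is the standard argument and is sound. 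The only point to make explicit is the local uniformity in $(z,\bar w)$ needed before applying Cauchy's formula, which you have correctly identified as the crux and which does hold here.
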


\begin{proof} Here we just detail the proof of (1) since it is of some interest for our purpose. 

Let $a \in {\mathbb R}$, $\beta \in {\mathbb C}$ and let $R\in {\mathbb R}\cup i{\mathbb R}$
such that $R^2=-a^2+\vert \beta \vert^2$. Then we have
\begin{equation} \label{eq:exp}\exp\begin{pmatrix}ai & \beta \\ \bar{\beta} & -ai \end{pmatrix}=g\left( \cosh R+\frac{\sinh R}{R}ai, \frac{\sinh R}{R}\beta\right).
\end{equation}
From this we deduce that if we denote $R(n)=\frac{1}{2}r(n)(a_1^2+a_2^2
-r(n)^2a_3^2)^{1/2}$ then we have $g_n=g(\alpha_n, \beta_n)$ with

\begin{equation*}\left\{\begin{aligned}
\alpha_n=&\cosh R(n)-ir(n)^2a_3\frac{\sinh R(n)}{2R(n)};\\
\beta_n=& r(n)(a_2-ia_1)\frac{\sinh R(n)}{2R(n)}.\\
\end{aligned}\right.\end{equation*}

Also, by Eq. \ref{eq:6spi}, we have
\begin{equation*}S_n(\pi_n(g_n))\left(\frac{z}{\sqrt {2\gamma n}}\right)=\left(\alpha_n-{\bar {\alpha_n}}\frac{\vert z\vert^2}{2\gamma n}-{\bar \beta_n}\frac{z}{\sqrt{2\gamma n}}+\beta_n\frac{\bar z}{\sqrt{2\gamma n}}\right)^{-n}\,\left(1-\frac{\vert z\vert^2}{2\gamma n}\right)^{n}.\end{equation*} 

Then 
\begin{equation*}
\log\left(S_n(\pi_n(g_n))\left(\frac{z}{\sqrt {2\gamma n}}\right)\right)\\
\sim -n(\alpha_n-1) +n{\bar \beta_n}\frac{z}{\sqrt {2\gamma n}}-n\beta_n
\frac{\bar z}{\sqrt {2\gamma n}}.
\end{equation*}

Consequently, since  
\begin{equation*}n(\alpha_n-1)=n\left(\cosh R(n)-1-ir(n)^2a_3 \frac{\sinh R(n)}{2R(n)}\right)\end{equation*}
has limit $\tfrac{\gamma}{4}(a_1^2+a_2^2)-i\gamma a_3$ and $n\beta_n/\sqrt{2\gamma n}$ has limit $a_2-ia_1$ when $n\to \infty$,
we see that 
\begin{equation*}\lim_{n\to +\infty} \,S_n(\pi_n(g_n))\left(\frac{z}{\sqrt {2\gamma n}}\right)=\exp \left( -\tfrac{\gamma}{4}(a_1^2+a_2^2)+i\gamma a_3
+\tfrac{1}{2}(a_2+ia_1)z-\tfrac{1}{2}(a_2-ia_1){\bar z}\right). \end{equation*}

The result hence follows by Eq. \ref{eq:5spiga}.
\end{proof}

For each $z\in {\mathbb C}$ let $f_z^{\gamma}:=\Vert e_z^{\gamma}\Vert_{\gamma}^{-1}e_z^{\gamma}\in {\mathcal H}_{\gamma}$
and for each $z\in {\mathbb D}$ let $f_z^{n}:=\Vert e_z^{n}\Vert_{n}^{-1}e_z^{n}\in {\mathcal H}_{n}$.

Let $X=a_1v_1+a_2v_2+a_3v_3\in {\mathfrak h}$ with $(a_1,a_2)\not=(0,0)$.
In accordance with the notation of Section \ref{sec:4}, we denote by $\varphi_{f_z^{\gamma}}$ the density of the spectral measure $\mu_{f_z^{\gamma}}$ corresponding to $-id\pi_{\gamma}(X)$ and by 
$\varphi_{f_z^{n}}$ the density of the spectral measure $\mu_{f_z^{n}}$ corresponding to $-id\pi_{n}(C_{r(n)}(X))$, see Section \ref{sec:4} and Section \ref{sec:5}. Then we have the following contraction result for these densities.

\begin{proposition} For each $\lambda \in {\mathbb R}$ and each $z\in {\mathbb C}$, we have 
$\lim_{n\to +\infty} \varphi_{f_{z/\sqrt{2\gamma n}}^{n}}(\lambda)=\varphi_{f_z^{\gamma}}(\lambda)$.
\end{proposition}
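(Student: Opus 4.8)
The plan is to express both spectral densities as Fourier integrals of Berezin symbols by means of Proposition~\ref{prop:41} and Eq.~\eqref{eq:dens}, and then to pass to the limit $n\to\infty$ under the integral sign. Set $A_n=-id\pi_n(C_{r(n)}(X))$ and $h_t:=\exp_H(-tX)=[-ta_1,-ta_2,-ta_3]$. Since $C_{r(n)}$ is linear and $c_{r(n)}(\exp_H Y)=\exp_{SU(1,1)}(C_{r(n)}(Y))$, one has $\exp(-itA_n)=\pi_n(\exp_{SU(1,1)}(-tC_{r(n)}(X)))=\pi_n(c_{r(n)}(h_t))$. Hence Eq.~\eqref{eq:dens} reads
\begin{equation*}
\varphi_{f^n_{z/\sqrt{2\gamma n}}}(\lambda)=\frac{1}{2\pi}\int_{\mathbb R}S_n\bigl(\pi_n(c_{r(n)}(h_t))\bigr)\Bigl(\tfrac{z}{\sqrt{2\gamma n}}\Bigr)\,e^{it\lambda}\,dt,
\end{equation*}
whereas, by Eq.~\eqref{eq:5spiga}, $\varphi_{f_z^\gamma}(\lambda)$ is the same integral with integrand $S_\gamma(\pi_\gamma(h_t))(z)\,e^{it\lambda}$. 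Thus the statement reduces entirely to a limit-integral interchange.

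For the pointwise convergence of the integrands I would fix $t$ and apply Proposition~\ref{prop:contrep}(1) with $h=h_t$, which gives
\begin{equation*}
\lim_{n\to\infty}S_n\bigl(\pi_n(c_{r(n)}(h_t))\bigr)\Bigl(\tfrac{z}{\sqrt{2\gamma n}}\Bigr)=S_\gamma(\pi_\gamma(h_t))(z),
\end{equation*}
the right-hand side being exactly the Heisenberg symbol of Eq.~\eqref{eq:5spiga}. After multiplication by $e^{it\lambda}$, the integrand therefore converges pointwise in $t$ to that of $\varphi_{f_z^\gamma}(\lambda)$.

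The main obstacle is to exhibit an $n$-independent integrable dominating function. Here I would use the explicit formula \eqref{eq:6spi} together with the parameters $\alpha_n,\beta_n$ computed in the proof of Proposition~\ref{prop:contrep} (with each $a_i$ replaced by $-ta_i$). Writing $w=z/\sqrt{2\gamma n}$, the base in \eqref{eq:6spi} is $D_n=\alpha_n-\bar{\alpha}_n|w|^2-\bar{\beta}_n w+\beta_n\bar w$; since $\alpha_n=\cosh R(n)+i\,t\,r(n)^2a_3\,\tfrac{\sinh R(n)}{2R(n)}$ and the two $\beta_n$-terms are purely imaginary, one gets $\Rea D_n=\cosh R(n)\,(1-|w|^2)>0$ for $n$ large, hence $|D_n|\ge\cosh R(n)\,(1-|w|^2)$ and the clean estimate
\begin{equation*}
\Bigl|S_n\bigl(\pi_n(c_{r(n)}(h_t))\bigr)(w)\Bigr|=|D_n|^{-n}(1-|w|^2)^n\le(\cosh R(n))^{-n}.
\end{equation*}

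To finish, recall $R(n)^2=\tfrac14 r(n)^2t^2(a_1^2+a_2^2-r(n)^2a_3^2)=\tfrac{\gamma s_n t^2}{2n}$ with $s_n:=a_1^2+a_2^2-r(n)^2a_3^2\to a_1^2+a_2^2>0$ (this is where $(a_1,a_2)\ne(0,0)$ is used). Combining $\cosh u\ge 1+\tfrac{u^2}{2}$ with the binomial bound $(1+\xi)^n\ge\binom{n}{2}\xi^2$ for $\xi\ge0$, I would estimate $(\cosh R(n))^{-n}\le(1+\tfrac{\gamma s_n t^2}{4n})^{-n}\le C\,t^{-4}$ for all large $n$, while trivially $(\cosh R(n))^{-n}\le1$. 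Hence $g(t):=\min(1,C\,t^{-4})$ is integrable on $\mathbb R$ and dominates every integrand once $n$ is large, so the dominated convergence theorem yields $\lim_{n\to\infty}\varphi_{f^n_{z/\sqrt{2\gamma n}}}(\lambda)=\varphi_{f_z^\gamma}(\lambda)$. The only delicate point is the uniformity of the constant $C$ in $n$, which follows at once from $s_n\to a_1^2+a_2^2>0$.
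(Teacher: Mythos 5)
Your proof is correct, and its skeleton (write both densities as the Fourier integral \eqref{eq:dens} of the star-exponential, get pointwise convergence of the integrands from Proposition~\ref{prop:contrep}(1), and conclude by dominated convergence) is exactly the paper's. Where you genuinely diverge is in producing the dominating function, which is the only nontrivial step. The paper first invokes the covariance property of Proposition~\ref{prop:22}(2) to move the evaluation point from $z/\sqrt{2\gamma n}$ to the origin, which forces it to compute the matrix of $\Ad(g_{z/\sqrt{2\gamma n}})^{-1}$, introduce the coefficients $c_i^n$ and $d_n$, and only then read off the bound $(\cosh(\tfrac12 r(n)d_n t))^{-n}\leq (1+C't^2)^{-1}$ from the explicit symbol at $0$. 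You instead bound the base $D_n=\alpha_n-\bar\alpha_n|w|^2-\bar\beta_n w+\beta_n\bar w$ of \eqref{eq:6spi} directly at $w=z/\sqrt{2\gamma n}$ by observing that $-\bar\beta_n w+\beta_n\bar w$ is purely imaginary and $\Rea\alpha_n=\cosh R(n)$, so that $|D_n|\geq\Rea D_n=\cosh R(n)(1-|w|^2)$ and the factors $(1-|w|^2)^{\pm n}$ cancel, giving $(\cosh R(n))^{-n}$ with no adjoint-action computation at all; the final integrable majorant $\min(1,Ct^{-4})$ (via $\binom{n}{2}\xi^2\leq(1+\xi)^n$ and $s_n\to a_1^2+a_2^2>0$) serves the same purpose as the paper's $(1+C't^2)^{-1}$. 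Your route is shorter and more elementary, at the price of being tied to the specific algebraic form of the $SU(1,1)$ kernel, whereas the paper's covariance reduction is the template it reuses in other contraction settings; both are complete.
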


\begin{proof} By Eq. \ref{eq:dens}, we have
\begin{equation*} \varphi_{f_{z/\sqrt{2\gamma n}}^{n}}(\lambda)=\frac{1}{2\pi}\int_{\mathbb R}
S_n(\pi_n(\exp(tC_{r(n)}(X))(z/\sqrt{2\gamma n})\,e^{-it\lambda}\,dt, \end{equation*}
so, taking into account (1) of Proposition \ref{prop:contrep}, we see that in order to get the result we have just to verify that the dominated convergence theorem
can be applied. To this end, we first note that $z/\sqrt{2\gamma n}\in {\mathbb D}$ for $n$ large enough. Then the expression $S_n(\pi_n(\exp(tC_{r(n)}(X))(z/\sqrt{2\gamma n})$ is well-defined for $n$ large enough and we have
\begin{align*}S_n(\pi_n(\exp(tC_{r(n)}(X))))(z/\sqrt{2\gamma n})=&
S_n(\pi_n(g_{z/\sqrt{2\gamma n}})^{-1}\pi_n(\exp(tC_{r(n)}(X)))
\pi_n(g_{z/\sqrt{2\gamma n}})))(0)\\
=&S_n(\pi_n(\exp(t\Ad(g_{z/\sqrt{2\gamma n}})^{-1}C_{r(n)}(X))))(0) \end{align*}
by (2) of Proposition \ref{prop:22}.

Now, let us denote by $(b^n_{ij})_{1\leq i,j\leq 3}$ the matrix of $\Ad(g_{z/\sqrt{2\gamma n}})^{-1}:su(1,1)\rightarrow su(1,1)$ in the basis $(u_i)_{1\leq i\leq 3}$ and introduce
\begin{equation*}\left\{\begin{aligned}
c_1^n:=&a_1b_{11}^n+a_2b_{12}^n+r(n)a_3b_{13}^n\\
c_2^n:=&a_1b_{21}^n+a_2b_{22}^n+r(n)a_3b_{23}^n \\
c_3^n:=&a_1b_{31}^n+a_2b_{32}^n+r(n)a_3b_{33}^n.
\end{aligned}\right.\end{equation*}

Then we have
\begin{align*} \Ad(g_{z/\sqrt{2\gamma n}})^{-1}(C_{r(n)}(X))&=
\Ad(g_{z/\sqrt{2\gamma n}})^{-1}(r(n)a_1u_1+r(n)a_2u_2+r(n)^2a_3u_3)\\
&=r(n)(c_1^nu_1+c_2^nu_2+c_3^nu_3).\end{align*}

Note that the sequences $(c_1^n)$, $(c_2^n)$ and $(c_3^n)$ are convergent; we denote by $c_1,c_2$ and $c_3$ the corresponding limits. Since we have
\begin{equation*}(c_1^n)^2+(c_2^n)^2-(c_3^n)^2=a_1^2+a_2^2-r(n)^2a_3^2,\end{equation*} we get
\begin{equation*}c_1^2+c_2^2-c_3^2=a_1^2+a_2^2>0.\end{equation*}

We denote $d_n:=((c_1^n)^2+(c_2^n)^2-(c_3^n)^2)^{1/2}$ for $n$ large enough. Then, by Eq. \ref{eq:6spi} and Eq. \ref{eq:exp}, we obtain
\begin{equation*}S_n(\pi_n(\exp(t\Ad(g_{z/\sqrt{2\gamma n}})^{-1}C_{r(n)}(X))))(0)=\left( \cosh( \tfrac{1}{2}r(n)d_nt)-ic_3^nd_n^{-1}\sinh( \tfrac{1}{2}r(n)d_nt)\right)^{-n}.\end{equation*}

Hence
\begin{align*}\vert &S_n(\pi_n(\exp(t\Ad(g_{z/\sqrt{2\gamma n}})^{-1}C_{r(n)}(X))))(0) \vert \leq (\cosh( \tfrac{1}{2}r(n)d_nt))^{-n}\\
&\leq \left( 1+ \tfrac{1}{2}(\tfrac{1}{2}r(n)d_nt)^2\right)^{-n}
\leq \left( 1+\frac{\gamma}{4n}d_n^2t^2\right)^{-n}. \end{align*}

Finally, since there exists $C>0$ such that $d_n\geq C$ for each $n$ large enough,
we conclude that there exists $C'>0$ such that 
\begin{equation*}\vert S_n(\pi_n(\exp(t\Ad(g_{z/\sqrt{2\gamma n}})^{-1}C_{r(n)}(X))))(0) \vert \leq \left(1+\frac{C't^2}{n}\right)^{-n}
\leq (1+C't^2)^{-1} \end{equation*}
for each $n$ sufficiently large. The result follows.
\end{proof}

The case $X=v_1$, $z=0$ considered in \cite{Luo} corresponds to the limit
\begin{equation*} \lim_{n\to +\infty} \frac{1}{2\pi}\, \int_{\mathbb R}\,e^{-it\lambda} \left(\cosh \tfrac{1}{2}r(n)t\right)^{-n}dt=
\frac{1}{2\pi}\, \int_{\mathbb R}\,e^{-it\lambda}\,e^{-\gamma t^2/4}dt
=\frac{1}{\sqrt{\pi\gamma}}e^{-{\lambda}^2/\gamma}.\end{equation*}

\section{The contraction of $SU(2)$ to the Heisenberg group} \label{sec:7}

The contraction of the unitary irreductible representations of $SU(2)$ to the unitary irreducible representations of $H$ was investigated in \cite{Ri} and \cite{CaBe1}. Some applications to Fourier multipliers can be found in \cite{DooG}. This contraction is quite analogous to that of the previous section but a little bit more complicated since the unitary irreductible representations of $SU(2)$ are finite-dimensional while the (non-degenerate) unitary irreducible representations of $H$ are not.

Let us denote the elements of $SU(2)$ as  \begin{equation*}g(a,b):=\begin{pmatrix}a & b\\ -{\bar b} & {\bar a} \end{pmatrix}, \quad a,b\in {\mathbb C}, \quad \vert a\vert^2+\vert b\vert^2=1.\end{equation*}

The Lie algebra $su(2)$ of $SU(2)$ has basis
\begin{equation*}u'_1=\frac{1}{2}\begin{pmatrix}0 & i\\ i & 0 \end{pmatrix};\quad
u'_2=\frac{1}{2}\begin{pmatrix}0 & -1\\ 1 & 0 \end{pmatrix};\quad
u'_3=\frac{1}{2}\begin{pmatrix}i & 0\\ 0 & -i \end{pmatrix}.
\end{equation*}

For each integer $m>0$, we denote by ${\mathcal F}_m$ the space of all complex polynomials of degree $\leq m$ endowed with the Hilbertian norm

\begin{equation*}\Vert f\Vert_m ^2=\int_{\mathbb C}\,\vert f(z) \vert ^2\,\,\tfrac{m+1}{ \pi} (1+z{\bar
z})^{-m-2}\,dx\,dy. \end{equation*} 

Then ${\mathcal F}_m$ is a Hilbert space of dimension $m+1$ (its Hilbert product is denoted by $\langle \cdot,\cdot \rangle_m$) and 
an orthonormal basis of ${\mathcal
F}_m$ is the family 
$f^m_p\,(z)={\binom{m}{p}}^{1/2}\,z^p$ for $p=0,1,\ldots, m$.

For each $w,z \in {\mathbb C}$, let $e_z^m(w)=(1+w{\bar
z})^{m}$. Then we have the reproducing property $\langle f,e_z^m\rangle_m=f(z)$ for each $f\in {\mathcal
F}_m$ and $z \in {\mathbb C}$.

We define the representation $\rho_m$ of $SU(2)$ on ${\mathcal
F}_m$ by 
\begin{equation*}(\rho_m\,(g(a,b))\,f)(z)=(a+{\bar b}z)^{m}\,f\left(\frac{{\bar a}z-b}{{\bar b}z+a}\right).\end{equation*}
Then $\rho_m$ is a unitary irreductible representation of $SU(2)$ whose differential 
is given by
\begin{equation*}\left\{\begin{aligned}
(d\rho_m(u'_1)f)(z)=&-\frac{m}{2}izf(z)+\frac{1}{2}i(z^2-1) f'(z)\\
(d\rho_m(u'_2)f)(z)=&-\frac{m}{2}zf(z)+\frac{1}{2}(z^2+1) f'(z) \\
(d\rho_m(u'_3)f)(z)=&\frac{m}{2}if(z)-iz f'(z).
\end{aligned}\right.\end{equation*}

For each operator $A$ on ${\mathcal
F}_m$, we denote by $S_m(A)$ the Berezin symbol of $A$.
Then we can verify that, see \cite{CaBe1},

\begin{equation}\label{eq:7spi} S_m(\rho_m(g(a,b)))(z)=(a+{\bar a}z{\bar z}+{\bar b}z-b{\bar z})^{m}\,(1+z{\bar z})^{-m}.\end{equation}

For each $r>0$, let $C'_r:{\mathfrak h}\rightarrow su(2)$ be the linear map defined by
\begin{equation*}C'_r(v_1)=r u'_1 ,\quad C'_r(v_2)=r u'_2,\quad
C'_r(v_3)=r^2 u'_3.\end{equation*}

Then we can verify that $(C'_r)$ is a contraction of $su(2)$ to $\mathfrak h$
the corresponding contraction of $SU(2)$ to $H$ being given by
\begin{equation*}c'_r(\exp_H\,X)=\exp_{SU(2)}(C'_r\,(X))\end{equation*}
for each $X\in {\mathfrak h}$, see \cite{Ri}.

For each integer $m>0$, let $r(m)>0$ be such that $mr(m)^2=2\gamma$.
Also, let $B'_m:{\mathcal F}_m\rightarrow {\mathcal F}_m\subset {\mathcal H}_{\gamma}$ be the unitary operator defined by $B'_m(f_p^m)=f_p^{\gamma}$. Then we have the following result which is analogous to Proposition \ref{prop:contrep}.

\begin{proposition} \label{prop:contrepbis} \rm{\cite{CaBe1}} Let $h=[a_1,a_2,a_3]\in H$ such that $(a_1,a_2)\not=(0,0)$ and for each $m>0$, let 
\begin{equation*} g_m:=
c'_{r(m)}(h)=\exp_{SU(2)}(C'_{r(m)}(a_1v_1+a_2v_2+a_3v_3)).
\end{equation*}
 Then 
\begin{enumerate}
\item   For each $z\in {\mathbb C}$, we have
\begin{equation*}\lim_{m\to +\infty}
S_m({\rho}_m(g_m))\left(\frac{z}{\sqrt{2\gamma
m}}\right)=s_{\gamma}(\pi_{\gamma}(h))(z).\end{equation*}

\item For each $p,q \in {\mathbb N}$, we have
\begin{equation*}\lim_{n\to +\infty}\langle \rho_m(g_m)f_p^m,f_q^m\rangle_m=
\langle \pi_{\gamma}(h)f_p^{\gamma},f_q^{\gamma}\rangle_{\gamma}.
\end{equation*}

\item For each polynomial $f\in  {\mathcal H}_{\gamma}$, we have

\begin{equation*}\lim_{m\to +\infty}\Vert (B'_m\rho_m(g_m)B'^{-1}_m)f-\pi_{\gamma}(h)f\Vert_{\gamma}=0.
\end{equation*} (This makes sense since we then have $f\in {\mathcal F}_m$ for each $m$ large enough).
\end{enumerate}
\end{proposition}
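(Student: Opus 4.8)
The plan is to follow closely the scheme used for Proposition \ref{prop:contrep}: first establish the symbol convergence (1) by a direct asymptotic computation, then deduce the matrix-coefficient convergence (2) from (1) by passing to the double symbol, and finally obtain the operator convergence (3) from (2) together with the unitarity of the $\rho_m(g_m)$.

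For (1), I would first record the $su(2)$ analogue of Eq. \ref{eq:exp}. Since any $\begin{pmatrix} ai & \beta \\ -\bar\beta & -ai\end{pmatrix}\in su(2)$ squares to $-(a^2+\vert\beta\vert^2)I$, one has, with $\theta=(a^2+\vert\beta\vert^2)^{1/2}$,
\begin{equation*}\exp\begin{pmatrix} ai & \beta \\ -\bar\beta & -ai\end{pmatrix}=g\left(\cos\theta+\frac{\sin\theta}{\theta}ai,\ \frac{\sin\theta}{\theta}\beta\right),\end{equation*}
the trigonometric functions replacing the hyperbolic ones because $SU(2)$ is compact. Writing $C'_{r(m)}(a_1v_1+a_2v_2+a_3v_3)$ as such a matrix gives $g_m=g(\alpha_m,\beta_m)$ with $\theta(m)=\tfrac12 r(m)(a_1^2+a_2^2+r(m)^2a_3^2)^{1/2}$ (note the $+$ sign under the root, in contrast with $R(n)$), and explicit $\alpha_m,\beta_m$ in terms of $\cos\theta(m)$ and $\sin\theta(m)/\theta(m)$. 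Substituting into Eq. \ref{eq:7spi} with $z$ replaced by $z/\sqrt{2\gamma m}$, taking logarithms and expanding as $m\to\infty$ under $mr(m)^2=2\gamma$, the $\vert z\vert^2$-terms coming from the two factors cancel exactly; using $\sin\theta(m)/\theta(m)\to 1$ and $\sqrt{m/2\gamma}\,r(m)=1$ one checks that $m(\alpha_m-1)$ and $m\beta_m/\sqrt{2\gamma m}$ have finite limits, and one recovers the Heisenberg symbol $s_\gamma(\pi_\gamma(h))(z)$ via Eq. \ref{eq:5spiga}, exactly as in Proposition \ref{prop:contrep}. This step is in fact simpler than in the $SU(1,1)$ case because $\theta(m)^2\geq 0$ removes the case distinction $R\in\mathbb R\cup i\mathbb R$; the bookkeeping of signs, governed by the choice of basis $u_i'$, is the only place requiring real care.

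For (2), I would upgrade (1) to the full double symbol $s_m(\rho_m(g_m))(z,w)$, which by the computation behind Eq. \ref{eq:7spi} equals $\bigl((\alpha_m+\bar\alpha_m z\bar w+\bar\beta_m z-\beta_m\bar w)/(1+z\bar w)\bigr)^m$, is holomorphic in $z$ and antiholomorphic in $w$, and converges locally uniformly under the same rescaling. Multiplying by the kernel $\langle e^m_{w/\sqrt{2\gamma m}},e^m_{z/\sqrt{2\gamma m}}\rangle_m=(1+z\bar w/2\gamma m)^m\to\exp(z\bar w/2\gamma)$ yields locally uniform convergence of $\langle\rho_m(g_m)e^m_{w/\sqrt{2\gamma m}},e^m_{z/\sqrt{2\gamma m}}\rangle_m$ to $\langle\pi_\gamma(h)e_w^\gamma,e_z^\gamma\rangle_\gamma$. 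Expanding both sides in the orthonormal bases $f_p^m,f_p^\gamma$ and extracting Taylor coefficients by Cauchy's formula recovers the matrix coefficients up to the normalizing factors $\binom{m}{p}^{1/2}(2\gamma m)^{-p/2}$, which converge precisely to $(2^p\gamma^p p!)^{-1/2}$; this matching gives (2) (valid for $m\geq\max(p,q)$, where $f_p^m$ is defined).

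Finally, (3) follows from (2): for a polynomial $f=\sum_p c_p f_p^\gamma$ one has $f\in\mathcal F_m$ for $m$ large, and $\Vert (B'_m\rho_m(g_m)B'^{-1}_m)f-\pi_\gamma(h)f\Vert_\gamma^2$ is controlled by the coefficient differences $\langle\rho_m(g_m)f_p^m,f_q^m\rangle_m-\langle\pi_\gamma(h)f_p^\gamma,f_q^\gamma\rangle_\gamma$. The main obstacle is exactly the point flagged in the text: the $SU(2)$ expansion is a finite sum (degrees $\leq m$) whereas the Heisenberg one is infinite, so pointwise convergence of the coefficients does not by itself prevent mass from escaping to infinity. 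I would resolve this through unitarity: $\Vert\rho_m(g_m)f_p^m\Vert_m=\Vert f_p^m\Vert_m=1=\Vert\pi_\gamma(h)f_p^\gamma\Vert_\gamma$, so the two coefficient families have equal $\ell^2$-norms; pointwise convergence together with equality of the norms forces $\ell^2$-convergence (a Radon--Riesz/Scheff\'e-type argument), which gives (3).
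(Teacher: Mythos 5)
Your overall architecture is sound, but note first that the paper does not prove this proposition at all: it is quoted from \cite{CaBe1}, and even for the analogous $SU(1,1)$ statement (Proposition \ref{prop:contrep}) the paper only proves part (1). So there is no in-paper proof to match; what you have done is supply the missing argument, modelling (1) on the proof of Proposition \ref{prop:contrep}(1) and inventing (2) and (3). Your step (1) is indeed the correct compact analogue (the identity $\exp\begin{pmatrix} ai & \beta \\ -\bar\beta & -ai\end{pmatrix}=g\bigl(\cos\theta+\tfrac{\sin\theta}{\theta}ai,\ \tfrac{\sin\theta}{\theta}\beta\bigr)$ with $\theta=(a^2+\vert\beta\vert^2)^{1/2}$ is right, as is the cancellation of the $\vert z\vert^2$ terms). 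Your step (2) is a correct and standard mechanism: the double symbol times the kernel $(1+z\bar w/2\gamma m)^m$ is a polynomial in $(z,\bar w)$ bounded on compacts by $\Vert e^m_{w'}\Vert_m\Vert e^m_{z'}\Vert_m\leq e^{(\vert z\vert^2+\vert w\vert^2)/4\gamma}$, so pointwise convergence upgrades to locally uniform convergence (Vitali) and Cauchy's formula gives the coefficients; the normalizing factors $\binom{m}{p}^{1/2}(2\gamma m)^{-p/2}\to(2^p\gamma^p p!)^{-1/2}$ do match. Your step (3) correctly identifies the only real obstacle (escape of mass from the finite-dimensional spaces) and the Radon--Riesz argument --- pointwise convergence of the coefficient sequences plus equality of their $\ell^2$-norms (both equal to $1$ by unitarity) forces $\ell^2$-convergence --- is exactly the right way to close it.

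One concrete caveat on (1), which you flag but do not resolve: with the conventions \emph{as literally written in this paper} (the basis $u'_j$, the map $C'_r$, and Eq.\ \ref{eq:7spi}), the linear terms in the exponent come out with the opposite sign from the Heisenberg target. This is visible already at the Lie algebra level: one computes $S_m(d\rho_m(u'_2))(z)=\tfrac{m}{2}\,\tfrac{\bar z-z}{1+z\bar z}$, so that $S_m\bigl(d\rho_m(r(m)u'_2)\bigr)\bigl(z/\sqrt{2\gamma m}\bigr)\to\tfrac12(\bar z-z)$, whereas $S_{\gamma}(d\pi_{\gamma}(v_2))(z)=\tfrac12(z-\bar z)$. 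Carried through your computation this yields $s_{\gamma}(\pi_{\gamma}([-a_1,-a_2,a_3]))(z)$ rather than $s_{\gamma}(\pi_{\gamma}(h))(z)$. This is almost certainly a sign-convention mismatch between this paper and \cite{CaBe1} (replacing $C'_r(v_j)$ by $-ru'_j$ for $j=1,2$ fixes it and is still a contraction) rather than a defect of your method, but since your sketch asserts that the expansion ``recovers the Heisenberg symbol'' without performing it, you should either carry out the bookkeeping explicitly or adjust the conventions; as it stands the claimed limit in (1) does not follow from the formulas quoted in the paper.
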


For each $z\in {\mathbb C}$ let $f_z^{m}:=\Vert e_z^{m}\Vert_{m}^{-1}e_z^{m}\in {\mathcal F}_{m}$
and for each $X=a_1v_1+a_2v_2+a_3v_3\in {\mathfrak h}$ with $(a_1,a_2)\not=(0,0)$, let us denote by $\mu_{f_z^{m}}$
the spectral measure $d(\langle E_{\lambda}f_z^{m},f_z^{m}\rangle_m)$
corresponding to $-id\rho_{m}(C_{r(m)}(X))$ (see  Section \ref{sec:4}).

\begin{proposition} \label{prop:cvsp2}
The sequence   $\mu_{f_{z/\sqrt{2\gamma m}}^{m}}$ converges to $\mu_{f_z^{\gamma}}$ in ${\mathcal S}'({\mathbb R})$. \end{proposition}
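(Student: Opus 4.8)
The plan is to move everything to the Fourier side, where the awkward spectral measures become tame bounded functions. By part (1) of Proposition \ref{prop:41}, for a unit vector $f$ the measure $\mu_f$ is the inverse Fourier transform of the characteristic function $F_f(t)=\langle\exp(-itA)f,f\rangle$, viewed in $\mathcal S'(\mathbb R)$; since $\mathcal F^{-1}$ is a homeomorphism of $\mathcal S'(\mathbb R)$, it suffices to prove
\[
F_{f_{z/\sqrt{2\gamma m}}^{m}}\longrightarrow F_{f_z^{\gamma}}\quad\text{in }\mathcal S'(\mathbb R).
\]
Here $A=-id\rho_m(C'_{r(m)}(X))$ acts on $\mathcal F_m$, so $\exp(-itA)=\rho_m(\exp(-tC'_{r(m)}(X)))$ and, exactly as in Section \ref{sec:4},
\[
F_{f_{z/\sqrt{2\gamma m}}^{m}}(t)=S_m\bigl(\rho_m(\exp(-tC'_{r(m)}(X)))\bigr)\bigl(z/\sqrt{2\gamma m}\bigr).
\]

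First I would establish pointwise convergence of these characteristic functions from the contraction of Berezin symbols already in hand. For fixed $t$, linearity of $C'_{r(m)}$ gives $\exp(-tC'_{r(m)}(X))=c'_{r(m)}(\exp_H(-tX))$ with $\exp_H(-tX)=[-ta_1,-ta_2,-ta_3]\in H$. Applying part (1) of Proposition \ref{prop:contrepbis} with $h=\exp_H(-tX)$ (whose first two coordinates $(-ta_1,-ta_2)\neq(0,0)$ for $t\neq0$, since $(a_1,a_2)\neq(0,0)$) yields, for each fixed $t$,
\[
F_{f_{z/\sqrt{2\gamma m}}^{m}}(t)\longrightarrow S_\gamma(\pi_\gamma(\exp(-tX)))(z)=F_{f_z^{\gamma}}(t),
\]
the last equality being the Heisenberg computation recorded in Eq. \ref{eq:5spiga}. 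The excluded point $t=0$ is harmless, both sides being identically $1$ there.

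I would then upgrade pointwise convergence to convergence in $\mathcal S'(\mathbb R)$ by dominated convergence. Since each $f_{z/\sqrt{2\gamma m}}^{m}$ is a unit vector and $\exp(-itA)$ is unitary, Cauchy--Schwarz gives the uniform bound $\bigl|F_{f_{z/\sqrt{2\gamma m}}^{m}}(t)\bigr|\leq1$ for all $t$ and all $m$. Hence, for every $\phi\in\mathcal S(\mathbb R)$, the integrands are dominated by $|\phi|\in L^1(\mathbb R)$, and dominated convergence gives
\[
\bigl\langle F_{f_{z/\sqrt{2\gamma m}}^{m}},\phi\bigr\rangle=\int_{\mathbb R}F_{f_{z/\sqrt{2\gamma m}}^{m}}(t)\,\phi(t)\,dt\longrightarrow\int_{\mathbb R}F_{f_z^{\gamma}}(t)\,\phi(t)\,dt=\bigl\langle F_{f_z^{\gamma}},\phi\bigr\rangle.
\]
Applying $\mathcal F^{-1}$ then gives $\mu_{f_{z/\sqrt{2\gamma m}}^{m}}\to\mu_{f_z^{\gamma}}$ in $\mathcal S'(\mathbb R)$, as desired.

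The main obstacle here is conceptual rather than computational, and it is precisely what forces the weaker $\mathcal S'$-formulation instead of the density convergence of Section \ref{sec:6}. Because $\mathcal F_m$ is finite-dimensional, the operator $-id\rho_m(C'_{r(m)}(X))$ has purely discrete spectrum, so $\mu_{f_z^{m}}$ is a finite sum of point masses and $F_{f_z^{m}}$ is a finite trigonometric sum: bounded but not integrable, so the density formula Eq. \ref{eq:dens} is simply unavailable and there is no approximating density to pass to the limit. The whole point of the argument is that the Fourier transform converts these atomic measures into the bounded functions $F_m$, on which the uniform bound $|F_m|\leq1$ makes dominated convergence immediate; the only genuine input needed is the pointwise limit, which is supplied verbatim by Proposition \ref{prop:contrepbis}(1).
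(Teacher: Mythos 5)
Your proposal is correct and follows essentially the same route as the paper: pointwise convergence of the symbols $F_m(t)=S_m(\rho_m(\exp(-tC'_{r(m)}(X))))(z/\sqrt{2\gamma m})$ from Proposition \ref{prop:contrepbis}(1), the uniform bound $\vert F_m\vert\leq 1$ via Cauchy--Schwarz and unitarity, dominated convergence to get convergence in ${\mathcal S}'({\mathbb R})$, and then continuity of ${\mathcal F}^{-1}$ together with Proposition \ref{prop:41}(1). Your added remarks on the $t=0$ point and on why the atomic spectral measures force the distributional formulation are accurate but not needed beyond what the paper does.
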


\begin{proof} Let $z\in {\mathbb C}$. To simplify notation, let $F(t):=S_{\gamma}(\pi_{\gamma}(\exp(tX)))(z)$ and for each $m\in {\mathbb N}$, let $F_m(t):=S_{m}(\rho_{m}(\exp(tC'_{r(m)}(X))))(z/\sqrt{2\gamma m})$. By (1) of Proposition \ref{prop:contrepbis},
we have $\lim_{m\to +\infty} F_m(t)=F(t)$ for each $t\in {\mathbb R}$.

Moreover, by the Cauchy-Schwarz inequality, we have
\begin{align*}\vert F_m&(t)\vert =\vert \langle \rho_{m}(\exp(tC'_{r(m)}(X)))
f_z^{m},f_z^{m}\rangle_m\vert \\
&\leq \Vert \rho_{m}(\exp(tC'_{r(m)}(X)))
f_z^{m}\Vert_m\, \Vert f_z^{m}\Vert_m \leq 1\\
\end{align*} since $\rho_{m}$ is unitary.

This implies that $(F_m)$ converges to $F$ in ${\mathcal S}'(\mathbb R)$. Indeed, for each $\phi \in {\mathcal S}(\mathbb R)$, the dominated convergence theorem shows that $\lim_{m\to +\infty}\int_{\mathbb R}F_m\phi=\int_{\mathbb R}F\phi$
since $\vert F_m\phi\vert \leq \vert \phi\vert\in L^1(\mathbb R)$ for each $m\geq 0$.

Finally, since ${\mathcal F}^{-1}:{\mathcal S}'(\mathbb R)\rightarrow {\mathcal S}'(\mathbb R)$ is
continuous, the result follows from (1) of Proposition \ref{prop:41}. 
\end{proof}

We conclude with the following example. We consider the case where $X=v_1$, $z=0$ hence $f_z^m=1$. 

First, let us compute the spectral measure $\mu^1:=d(\langle E_{\lambda}1,1\rangle_m)$ corresponding to the operator $A_1:=-id\rho_m(u'_1)$ on ${\mathcal F}_m$.

By Proposition \ref{prop:41}, we have just to compute the inverse Fourier transform of the function 
\begin{equation*}S_m(\rho_m(\exp(-tu'_1)))(0)=\left(\cos \left(\tfrac{1}{2}t \right)\right)^m=\left(\frac{1}{2}\right)^m\sum_{k=0}^m\binom{m}{k}
e^{it\left(\tfrac{m}{2}-k\right)}.\end{equation*}

But we can easily verify that for each $\lambda \in {\mathbb R}$, we have ${\mathcal F}(\delta_{\lambda})=e^{-i\lambda t}$. Consequently, we get
\begin{equation*} \mu^1=\left(\frac{1}{2}\right)^m\sum_{k=0}^m\binom{m}{k}
\delta_{k-\tfrac{m}{2}}.\end{equation*}

In fact, we can also find this result directly but this is a little bit more longer.
This can be done as follows. First, we remark that a basis of ${\mathcal F}_m$ consisting of eigenvectors of $A_1$ is \begin{equation*}F_k^m=(z-1)^k(z+1)^{m-k}, \qquad
k=0,1,\ldots m, \end{equation*}
the eigenvalue associated with $F_k^m$ being $\lambda_k:=k-\tfrac{m}{2}$. 

Then, denoting by $P_k$ the orthogonal projection operator of ${\mathcal F}_m$ on the line generated by $F_k^m$, we have
\begin{equation*} \mu^1=\sum_{k=0}^m\langle P_k(1),1\rangle_m\delta_{\lambda_k}.\end{equation*}

It remains to compute $\langle P_k(1),1\rangle_m$. One has 
$P_k(1)=\Vert F_k^m\Vert_m^{-2}\,\langle 1,F_k^m\rangle_mF_k^m$
hence 
\begin{equation*}\langle P_k(1),1\rangle_m=\Vert F_k^m\Vert_m^{-2}\vert  \langle 1,F_k^m\rangle_m\vert^2. \end{equation*}

Now, on the one hand, we have $ \langle 1,F_k^m\rangle_m=(-1)^k$ since
$ \langle 1,z^q\rangle_m=0$ for each $q>0$. On the other hand,
by the binomial formula we have  
\begin{equation*}2^m=\sum_{k=0}^m\binom{m}{k}(-1)^kF_k^m
\end{equation*} hence 
\begin{equation*} \langle 1,F_k^m\rangle_m=\left(\frac{1}{2}\right)^m
\binom{m}{k}(-1)^k\Vert F_k^m\Vert_m^{2}\end{equation*} which gives
$\Vert F_k^m\Vert_m^{2}=2^m {\binom{m}{k}}^{-1}$ and finally
$\langle P_k(1),1\rangle_m=\left(\tfrac{1}{2}\right)^m {\binom{m}{k}}$
as required.

Similarly, we can verify that the spectral measure corresponding to the operator 
\begin{equation*}-id\rho_m(C_{r(m)}(v_1))=-ir(m)d\rho_m(u'_1)
\end{equation*} is 
\begin{equation*}\mu_m:=\left(\frac{1}{2}\right)^m \sum_{k=0}^m{\binom{m}{k}}\delta_{r(m)(k-\tfrac{m}{2})}.\end{equation*}

Consequently, applying Proposition \ref{prop:cvsp2} and taking the results
of Section \ref{sec:5} into account, we obtain the following result.

\begin{proposition} For each $\phi \in {\mathcal S}({\mathbb R})$, we have
\begin{equation*}\lim_{m\to +\infty}\left(\frac{1}{2}\right)^m \sum_{k=0}^m{\binom{m}{k}}\phi \left(\sqrt{\frac{2\gamma}{m}}(k-\tfrac{m}{2})\right)=\int_{\mathbb R}\, \frac{1}{\sqrt{\pi \gamma}}\,e^{-\lambda^2/\gamma}\phi (\lambda)\,d\lambda.
\end{equation*}
 
\end{proposition}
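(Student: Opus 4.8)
The plan is to obtain the identity as the pairing of the $\mathcal{S}'(\mathbb{R})$-convergence already established in Proposition \ref{prop:cvsp2}, specialized to $X=v_1$ and $z=0$, against a fixed Schwartz function. First I would recall that, as computed just before the statement, for $X=v_1$ and $z=0$ one has $f_0^m=1$ and the corresponding atomic measure is $\mu_m=\mu_{f_0^m}$. Since $r(m)=\sqrt{2\gamma/m}$, pairing $\mu_m$ with $\phi\in\mathcal{S}(\mathbb{R})$ gives exactly
\begin{equation*}
\langle \mu_m,\phi\rangle = \left(\frac{1}{2}\right)^m \sum_{k=0}^m \binom{m}{k}\,\phi\left(\sqrt{\frac{2\gamma}{m}}\left(k-\tfrac{m}{2}\right)\right),
\end{equation*}
which is precisely the left-hand side of the claimed limit.

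Next I would invoke Proposition \ref{prop:cvsp2} with $z=0$: the sequence $\mu_m=\mu_{f_0^m}$ converges to $\mu_{f_0^\gamma}$ in $\mathcal{S}'(\mathbb{R})$, hence $\langle \mu_m,\phi\rangle\to\langle \mu_{f_0^\gamma},\phi\rangle$ for every $\phi$. It then remains to identify the limit measure $\mu_{f_0^\gamma}$ attached to $X=v_1$, which is furnished by Section \ref{sec:5}: setting $a_1=1$, $a_2=a_3=0$ and $z=0$ (so $x=y=0$) in the Gaussian density obtained there yields
\begin{equation*}
\varphi_{f_0^\gamma}(\lambda)=\frac{1}{\sqrt{\pi\gamma}}\,e^{-\lambda^2/\gamma}.
\end{equation*}
Combining these two displays produces the stated limit $\int_{\mathbb{R}}\frac{1}{\sqrt{\pi\gamma}}e^{-\lambda^2/\gamma}\phi(\lambda)\,d\lambda$.

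There is no real obstacle here beyond bookkeeping; the only points needing care are verifying that $e_0^m=1$ is already normalized, so that indeed $f_0^m=1$, and confirming that the limiting Gaussian has variance $\gamma/2$ (equivalently, density $\frac{1}{\sqrt{\pi\gamma}}e^{-\lambda^2/\gamma}$) rather than some other normalization. As an independent sanity check, I note that the identity is a disguised central limit theorem: the weights $(1/2)^m\binom{m}{k}$ are the point masses of a $\mathrm{Binomial}(m,1/2)$ variable $S_m$ of mean $m/2$ and variance $m/4$, so the rescaled atoms $\sqrt{2\gamma/m}(k-\tfrac{m}{2})$ are the values taken by $\sqrt{2\gamma/m}(S_m-\tfrac{m}{2})$, which converges in distribution to a centered Gaussian of variance $\gamma/2$; since $\phi$ is bounded and continuous, the expectations $\langle \mu_m,\phi\rangle$ converge to the integral of $\phi$ against $\frac{1}{\sqrt{\pi\gamma}}e^{-\lambda^2/\gamma}$, in agreement with the framework.
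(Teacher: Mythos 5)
Your argument is correct and is exactly the paper's own derivation: the paper obtains this proposition by pairing the explicitly computed atomic measure $\mu_m$ against $\phi$, applying Proposition \ref{prop:cvsp2} with $z=0$, and identifying the limit with the Gaussian density from Section \ref{sec:5} at $a_1=1$, $a_2=a_3=0$, $x=y=0$. Your closing central-limit-theorem sanity check is a nice independent confirmation but is not part of the paper's reasoning.
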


\noindent \textbf {Acknowledgements.} I would like to thank the referee for suggesting some improvements to the text of the paper.

\noindent \textbf {Data availability statement.} Data sharing is not applicable to this article as no datasets were generated or analysed during the current study.

\end{document}